\newtheorem{theorem}{Theorem}[section]
\newtheorem{lemma}[theorem]{Lemma}
\newtheorem{corollary}[theorem]{Corollary}
\newtheorem{assumpx}{Theorem}
\newenvironment{NamedTheo}[2][]{\begin{assumpx}[#1]}{\end{assumpx}}
\theoremstyle{definition}
\newtheorem{definition}[theorem]{Definition}
\newtheorem{question}[theorem]{Question}
\theoremstyle{remark}
\newtheorem{remark}[theorem]{Remark}
\newtheorem{example}[theorem]{Example}
\DeclareMathOperator{\Aut}{Aut}
\DeclareMathOperator{\lcm}{lcm}
\DeclareMathOperator{\C}{C}
\DeclareMathOperator{\D}{D}
\DeclareMathOperator{\E}{E}
\DeclareMathOperator{\V}{V}
\DeclareMathOperator{\W}{W}
\DeclareMathOperator{\FQ}{\chi_F}
\DeclareMathOperator{\OQ}{\chi_O}
\DeclareMathOperator{\SW}{SW}
\renewcommand{\d}{\mathrm d}
\newcommand{\du}{\overline{\mathrm{d}}}
\newcommand{\A}{\mathcal{B}}
\newcommand{\OO}{\mathcal{O}}
\newcommand{\NP}{\mathcal{NS}}
\newcommand{\SP}{\mathcal{S}}
\newcommand{\ZZ}{\mathbb{Z}}
\newcommand{\NN}{\mathbb{N}}
\newcommand{\PP}{\mathbb{P}}
\begin{document}

\title[Density of quotient and graph orders]{Density of quotient orders in groups and applications to locally-transitive graphs}
\author{Marston Conder, Gabriel Verret, Darius Young}
\address{Department of Mathematics, University of Auckland, 1010 Auckland, New Zealand}
\email{m.conder@auckland.ac.nz, g.verret@auckland.ac.nz, darius.young@auckland.ac.nz}

\keywords{Finitely generated, edge-transitive, arc-transitive, locally-transitive}

\begin{abstract}
We prove that the set of orders of finite quotients of a finitely generated group has natural density $0$, $1/2$ or $1$, and characterise when each of these cases occurs. We apply this to show that the sets of orders of various families of symmetric graphs have natural density $0$.
\end{abstract}

\maketitle

\section{Introduction}\label{sec:intro}
Groups are often studied via their quotients, especially via their finite quotients. There is a huge amount of literature dedicated to studying the number $f_n^{\triangleleft }(G)$ of normal subgroups of index $n$ in a group $G$, especially in the case when $G$ is finitely generated. Determining $f_n^{\triangleleft }(G)$ exactly is often very difficult, so attention is usually restricted to the growth type of $f_n^{\triangleleft }(G)$ and how it relates to the structure of $G$. See~\cite{LubSegal} for a beautiful introduction to this topic.

One can also consider the following variant: instead of studying the set of all normal subgroups of finite index of a group $G$, one can simply consider the set of their indices, which we denote by $\FQ(G)$. (Equivalently, $\FQ(G)$ is  the set of orders of finite quotients of $G$.) As is the case for $f_n^{\triangleleft }(G)$, the set $\FQ(G)$ is often also too difficult to determine exactly, so it is natural to consider its growth or density.

This question was previously studied by Larsen who defined a particular zeta-function  $Z_G(s)=\sum_{n\in\FQ(G)} n^{-s}$ for a finitely generated group $G$ and related the  convergence of $Z_G(s)$ to the minimum dimension of a complex linear algebraic group containing an infinite quotient group of $G$, see  \cite[Theorem 0.3]{Larsen}.

In this paper, we consider the natural density $\d(\FQ(G))$ of $\FQ(G)$ in $\NN$. (The equivalent notion for not necessarily normal subgroups was previously studied by Shalev~\cite{Shalev}.) Our main result is a complete characterisation in the finitely generated case.

\begin{NamedTheo}{A}\label{Theo:A}
If $G$ is a finitely generated group, then exactly one of the following holds:
\begin{enumerate}
\item $G$ has an infinite cyclic quotient and $\FQ(G)$ consists of all positive integers, \label{Density1}
\item $G$ has an infinite dihedral quotient but no infinite cyclic quotient, and $\d(\FQ(G))=1/2$, \label{DensityHalf}
\item $G$ has no infinite cyclic or infinite dihedral quotient, and $\d(\FQ(G))=0$. \label{Density0}
\end{enumerate}
\end{NamedTheo}

In fact, our techniques are more general, and apply also to some groups that are not finitely generated. For example, we can show that if $G$ is the free product of (possibly infinitely many) cyclic groups of bounded order, at most one of which has even order, then $\d(\FQ(G))=0$, see Example~\ref{NonFGExample}. For the more general version of our findings, see Section~\ref{sec:main}.

In Section~\ref{sec:Graphs}, we apply our group-theoretic observations to various families of graphs. The most general of these are somewhat technical, but they have the following consequence:
\begin{NamedTheo}{B}\label{Theo:B}
\mbox{}
\begin{enumerate}
\item The sets of orders of finite connected $3$-valent and $5$-valent edge-transitive graphs have natural density $0$. \label{edge-transitive}
\item For every odd prime $p$, the set of orders of connected arc-transitive graphs of valency $p$ has natural density $0$. \label{arc-transitive}
\item For every integer $d\geq 3$, the set of orders of connected $2$-arc-transitive graphs of valency $d$ has natural density $0$. \label{2arc-transitive}
\end{enumerate}
\end{NamedTheo}

For the more general version, see Section~\ref{sec:Graphs}.

\section{Preliminaries}

\subsection{Number Theory}
For a subset $X$ of $\NN$, the \emph{natural density} of $X$ is 
$$\d(X)=\lim_{n\to\infty} \frac{|X\cap \{1,\ldots,n\}|}{n},$$
 if that limit exists. The \emph{upper natural density} of $X$ is 
$$\du(X)=\limsup_{n\to\infty} \frac{|X\cap \{1,\ldots,n\}|}{n},$$
which always exists.  Clearly $\d(X)=0$ if and only if $\du(X)=0$.  The following set will play a crucial role in our proof.

\begin{definition}\label{Def:Main}
For a prime $p$, let $\NP_p$ be the set of positive integers $n$ such that
\begin{enumerate}
\item $p$ divides $n$, but $p^2$ does not, and \label{CyclicSylow}
\item if $d$ divides $n$ and $d\equiv 1 \pmod p$, then $d=1$. \label{NormalSylow}
\end{enumerate}
\end{definition}

Note that (\ref{CyclicSylow}) and (\ref{NormalSylow}) in Definition~\ref{Def:Main} are  sufficient conditions on $n$ for which a direct application of the Sylow Theorems yields that every group of order $n$ has a normal Sylow $p$-subgroup of order $p$ (hence the notation $\NP_p$). 

\begin{theorem}\label{prop:density0}
For every prime $p$, we have $\d(\NP_p)=0$. 
\end{theorem}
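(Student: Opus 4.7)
The plan is to reduce the claim to a standard sieve estimate. The key observation is that condition~(\ref{NormalSylow}) in Definition~\ref{Def:Main} is very restrictive on the prime factorisation of $n$: every prime divisor $q$ of $n$ is itself a divisor greater than $1$, so condition~(\ref{NormalSylow}) forces $q \not\equiv 1 \pmod p$. Hence $\NP_p$ is contained in the set $\mathcal{M}_p$ of positive integers none of whose prime divisors is $\equiv 1 \pmod p$, and it suffices to prove $\d(\mathcal{M}_p)=0$. The case $p=2$ is then immediate, since the only prime not $\equiv 1 \pmod 2$ is $2$ itself, so $\mathcal{M}_2$ consists of the powers of $2$ (and in fact condition~(\ref{CyclicSylow}) already pins $\NP_2$ down to $\{2\}$).

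For an odd prime $p$, I would apply the standard inclusion–exclusion sieve: for any finite set $F$ of primes,
$$\bigl|\{n\leq N : q\nmid n\text{ for all }q\in F\}\bigr|=N\prod_{q\in F}(1-1/q)+O(2^{|F|}),$$
so the set of integers having no prime factor in $F$ has natural density exactly $\prod_{q\in F}(1-1/q)$. Taking $F$ to be any finite set of primes congruent to $1 \pmod p$, this yields $\du(\mathcal{M}_p)\leq\prod_{q\in F}(1-1/q)$.

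To drive the right-hand side to $0$, I would invoke the classical divergence
$$\sum_{\substack{q\text{ prime}\\q\equiv 1\pmod p}}\frac{1}{q}=\infty,$$
a standard strengthening of Dirichlet's theorem on primes in arithmetic progressions. Since $-\log\prod_{q\in F}(1-1/q)\geq\sum_{q\in F}1/q$, the product can be made arbitrarily small by choosing $F$ large enough, giving $\du(\mathcal{M}_p)=0$ and hence $\d(\NP_p)=0$. The sieve computation is routine and the only substantive ingredient is the cited divergence of reciprocals of primes in the arithmetic progression $1 \pmod p$, so no significant obstacle is anticipated.
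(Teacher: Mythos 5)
Your proposal is correct. The reduction is sound: if $n\in\NP_p$ and $q$ is a prime divisor of $n$, then $q$ is a divisor of $n$ exceeding $1$, so condition~(\ref{NormalSylow}) of Definition~\ref{Def:Main} forces $q\not\equiv 1\pmod p$; hence $\NP_p$ lies in the set of integers free of prime factors $\equiv 1\pmod p$, and the Eratosthenes--Legendre sieve bound $N\prod_{q\in F}(1-1/q)+O(2^{|F|})$ together with the divergence of $\sum_{q\equiv 1\ (p)}1/q$ (Dirichlet/Mertens for arithmetic progressions) drives the upper density to $0$. The paper does not give its own argument here --- it simply cites the remark following Theorem~2.4.2 of Cojocaru--Murty --- and what you have written is exactly the standard sieve computation that this citation encapsulates, so your proof is a legitimate self-contained substitute (note you only use condition~(\ref{NormalSylow}) and discard condition~(\ref{CyclicSylow}), which is harmless since only an upper bound on the density is needed); the one external input you still rely on, the divergence of the reciprocals of primes in the progression $1\pmod p$, is classical.
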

\begin{proof}
See for example the remark following~\cite[Theorem~2.4.2]{SieveMethods}.
\end{proof}

In a vague sense, Theorem~\ref{prop:density0} says that, fixing a prime $p$, for most integers $n$ the Sylow Theorems are not enough to guarantee that a group of order $n$ has a normal Sylow $p$-subgroup of order $p$. Perhaps surprisingly, the opposite is true if the prime $p$ is allowed to vary, because $\d\left(\bigcup_{p\in\PP} \NP_p \right )=1$, where  $\PP$ denotes the set of primes (see for example~\cite[Lemma~4]{Bertram}). As we will soon see with Theorem~\ref{Theo:density1}, this remains true if $p$ is allowed to take only certain restricted values.

\begin{definition}\label{Def:Main2}
For a given positive integer $a$, let $\PP_a$ denote the set of primes $p$ such that $\gcd(a,p)=1$ and $\gcd(a,(p-1))\leq 2$, and let $\SP_a= \bigcup_{p\in\PP_a} \NP_p$.
\end{definition}

The following is an immediate consequence of~\cite[Lemma~5]{Young}.
\begin{theorem}\label{Theo:density1}
For every positive integer $a$, we have $\d(\SP_a)=1$ and thus $\d(\NN\setminus\SP_a)=0$.
\end{theorem}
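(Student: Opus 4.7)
My plan is to apply \cite[Lemma~5]{Young}, which (essentially) asserts that $\d\!\left(\bigcup_{p \in \PP} \NP_p\right) = 1$ for any set $\PP$ of primes whose relative density among all primes is positive. This is the natural strengthening of the result (cited above from Bertram) that $\d\!\left(\bigcup_{p \in \PP} \NP_p\right) = 1$ when $\PP$ is the full set of primes, and it is the technical content behind Theorem~\ref{Theo:density1}. To conclude, I would apply it with $\PP = \PP_a$.

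The remaining task is then to verify the positive relative density hypothesis for $\PP_a$. The condition $\gcd(a, p) = 1$ removes only the finitely many prime divisors of $a$. The condition $\gcd(a, p - 1) \leq 2$ is equivalent to the conjunction of: $p \not\equiv 1 \pmod{q}$ for each odd prime $q \mid a$, together with $p \not\equiv 1 \pmod{4}$ if $4 \mid a$. These are finitely many congruence exclusions modulo fixed integers, each ruling out only a proper subset of reduced residues. By Dirichlet's theorem on primes in arithmetic progressions, the primes satisfying all of them simultaneously have positive relative density, which is the hypothesis we need.

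The hard part, which I would defer entirely to \cite{Young}, is \cite[Lemma~5]{Young} itself; I expect it to be proved by a sieve argument in the spirit of \cite[Lemma~4]{Bertram}. The underlying intuition is that a typical integer $n$ has its largest prime factor $P(n)$ appearing with multiplicity one and exceeding $\sqrt{n}$; whenever $P(n) \in \PP_a$, every divisor $d > 1$ of $n/P(n)$ satisfies $d \leq n/P(n) < P(n)$ and so cannot be congruent to $1$ modulo $P(n)$, forcing $n \in \NP_{P(n)} \subseteq \SP_a$. The main obstacle is then quantifying this to get density exactly $1$ (rather than merely some positive constant) when $P(n)$ is restricted to lie in a proper subset of the primes, which requires the full strength of the sieve together with the positive density of $\PP_a$ established in the previous paragraph.
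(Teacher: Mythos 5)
Your proposal matches the paper's approach: the paper's entire proof of this theorem is a one-line citation of \cite[Lemma~5]{Young}, exactly the reference you defer the hard sieve content to, so the substance is the same. The only difference is that the paper reads that lemma as giving $\d(\SP_a)=1$ directly (``an immediate consequence''), so your Dirichlet-based verification that $\PP_a$ has positive relative density is harmless extra scaffolding built around a possibly stronger paraphrase of the lemma than what is literally cited, not a genuinely different route.
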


Combining Theorems~\ref{prop:density0} and~\ref{Theo:density1}, one obtains the following. 

\begin{corollary}\label{Cor:density1}
If $a$ is a positive integer and $X\subseteq\NN$ with $\du(X)>0$, then the set  $\{p\in\PP_a\colon X\cap \NP_p \neq \emptyset\}$ is infinite.
\end{corollary}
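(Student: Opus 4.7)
The plan is to argue by contradiction, reducing the statement to a direct combination of Theorems~\ref{prop:density0} and~\ref{Theo:density1} via the subadditivity of upper natural density.

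Suppose, for contradiction, that the set $S=\{p\in\PP_a\colon X\cap \NP_p\neq\emptyset\}$ is finite, say $S=\{p_1,\ldots,p_k\}$. I want to show this forces $\du(X)=0$. The key observation is that every element of $X$ either fails to lie in $\SP_a=\bigcup_{p\in\PP_a}\NP_p$, or it lies in $\NP_p$ for some $p\in\PP_a$; in the latter case, by the assumption on $S$, that $p$ must be one of $p_1,\ldots,p_k$. Hence
$$X \subseteq (\NN\setminus \SP_a)\cup \NP_{p_1}\cup\cdots\cup\NP_{p_k}.$$

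Now Theorem~\ref{Theo:density1} gives $\d(\NN\setminus \SP_a)=0$, and Theorem~\ref{prop:density0} gives $\d(\NP_{p_i})=0$ for each $i=1,\ldots,k$. Since the upper density of a finite union is at most the sum of the upper densities of its terms, the right-hand side has upper density $0$. Monotonicity of $\du$ then yields $\du(X)=0$, contradicting the hypothesis $\du(X)>0$.

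There is no real obstacle here: the corollary is essentially a packaging of the two preceding theorems, and the only tool needed beyond them is the elementary fact that $\du$ is finitely subadditive (which in turn follows at once from the inclusion $|A\cup B\cap\{1,\ldots,n\}|\leq |A\cap\{1,\ldots,n\}|+|B\cap\{1,\ldots,n\}|$). The mild subtlety worth flagging is simply that one must use upper density throughout, since $X$ itself is not assumed to have a natural density.
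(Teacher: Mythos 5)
Your proof is correct and follows essentially the same route as the paper: assume the set of primes is finite, observe $X\subseteq(\NN\setminus\SP_a)\cup\bigcup_{p\in\PP'}\NP_p$, and combine Theorems~\ref{Theo:density1} and~\ref{prop:density0} with finite subadditivity to force density $0$, contradicting $\du(X)>0$. Your explicit remark about working with upper density throughout is a fair point, but the paper handles this implicitly via the fact (stated in the preliminaries) that $\d(X)=0$ if and only if $\du(X)=0$.
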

\begin{proof}
Suppose on the contrary that $\{p\in\PP_a\colon X\cap \NP_p \neq \emptyset\}$ is finite. This implies that there is a finite set of primes $\PP'\subseteq\PP_a$ such that, if $p\in \PP_a\setminus \PP'$, then $X\cap \NP_p=\emptyset$. It follows that $X\subseteq (\NN\setminus\SP_a)\cup \left(\bigcup_{p\in \PP'} \NP_p\right)$.  By Theorem~\ref{Theo:density1}, $\d(\NN\setminus\SP_a)=0$ and, since $\PP'$ is finite, also  $ \d\left(\bigcup_{p\in \PP'} \NP_p\right)=0$ by Theorem~\ref{prop:density0}. This implies that $\d(X)=0$, a contradiction.
\end{proof}

\subsection{Group Theory}

For a group $G$, let $\OO(G)$ denote the subgroup of $G$ generated by elements of odd order.

\begin{lemma}\label{lem:odd}
If $N$ is a normal subgroup of a finite group $G$, then $\OO(G/N)=\OO(G)N/N$.
\end{lemma}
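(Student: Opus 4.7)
The plan is to establish the equality by proving the two inclusions separately, with the forward inclusion being essentially immediate and the reverse inclusion requiring a short Bezout-style decomposition.

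First I would verify $\OO(G)N/N \subseteq \OO(G/N)$. Since $\OO(G)$ is generated by the odd-order elements of $G$, the subgroup $\OO(G)N/N$ of $G/N$ is generated by the cosets $gN$ as $g$ ranges over the odd-order elements of $G$. The order of each such $gN$ divides $|g|$ and is therefore odd, so $gN \in \OO(G/N)$; this gives the inclusion.

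For the reverse inclusion $\OO(G/N) \subseteq \OO(G)N/N$, it suffices to show that every odd-order element of $G/N$ lies in $\OO(G)N/N$, because such elements generate $\OO(G/N)$. Given $gN$ of odd order $m$ in $G/N$, I would write $|g| = 2^a b$ with $b$ odd. Since $m$ divides $|g|$ and is odd, $m \mid b$, and hence $g^b \in N$. Choosing integers $s,t$ with $s\cdot 2^a + t\cdot b = 1$ (possible since $\gcd(2^a,b)=1$), the identity $g = (g^{2^a})^s (g^b)^t$ expresses $g$ as a product of a power of $g^{2^a}$, which has odd order $b$ and therefore lies in $\OO(G)$, and a power of $g^b \in N$. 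Reducing modulo $N$ yields $gN \in \OO(G)N/N$.

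There is no real obstacle; the only substantive step is the reverse inclusion, and even there the Bezout splitting into the $2$-part and the odd part of $\langle g \rangle$ is standard. Finiteness of $G$ is only used to guarantee that each element has finite order, so the same argument applies verbatim whenever $G/N$ is periodic.
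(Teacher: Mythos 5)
Your proof is correct and follows essentially the same route as the paper: the inclusion $\OO(G)N/N\leq\OO(G/N)$ is immediate, and the reverse inclusion is obtained by a Bezout-style coprime splitting inside the abelian group $\langle g\rangle$. The only cosmetic difference is where the Bezout identity is applied: you write $|g|=2^ab$ and decompose $g=(g^{2^a})^s(g^b)^t$ with the first factor of odd order and the second in $N$, whereas the paper applies Bezout to $k=|gN|$ and the $2$-part of $|g^k|$ to replace $g$ by an odd-order element $h=gz^{-a}$ of the same coset; the two computations are interchangeable.

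One caveat about your closing sentence: periodicity of $G/N$ alone does not suffice, since your argument (and the paper's) needs $g$ itself to have finite order in order to form $|g|=2^ab$ (respectively, the $2$-part of $|g^k|$). The correct verbatim generalisation is to periodic groups $G$; indeed the statement itself fails for $G=\ZZ$ and $N=3\ZZ$, where $\OO(G/N)=G/N\cong\C_3$ while $\OO(G)N/N$ is trivial.
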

\begin{proof}
If the order of $g\in G$ is odd, then so is the order of $gN\in G/N$. This shows that $\OO(G)N/N\leq \OO(G/N)$. Conversely, let $gN$ be an element of odd order $k$ in $G/N$. Let $z=g^k\in N$ and let $m$ be the $2$-part of $|z|$. Since $\gcd(k,m)=1$, there exist integers $a$ and $b$ such that $1=ak+bm$. Let $h=gz^{-a}\in gN$. Since $g$ and $z$ commute, we have
$$h^k=(gz^{-a})^k=g^kz^{-ak}=z^{1-ak}=z^{bm}.$$ By definition of $m$, we know that $z^{bm}$ has odd order. Since $k$ is odd, also $h$ has odd order and therefore so does $hN=gN$. This shows that $\OO(G/N)\leq\OO(G)N/N$ and completes the proof.
\end{proof}

For a positive integer $n$, we denote the cyclic group of order $n$ by $\C_n$ and the dihedral group of order $2n$ by $\D_{2n}$.

\begin{lemma} \label{lemmaQuo}
Let $p$ be a prime. If $G$ is a finite group with $|G|\in \NP_p$, then $G$ has a quotient of the form $\C_p\rtimes H$, with $H$ acting faithfully on $\C_p$. In particular, $H$ is cyclic of order dividing $p-1$.
\end{lemma}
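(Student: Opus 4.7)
The plan is to produce the desired quotient in three steps: first extract the normal Sylow $p$-subgroup guaranteed by $|G|\in\NP_p$, then kill off everything in its centralizer except itself, and finally obtain the semidirect product decomposition via Schur--Zassenhaus.

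First I would invoke the remark immediately following Definition~\ref{Def:Main}: conditions (\ref{CyclicSylow}) and (\ref{NormalSylow}) of the definition of $\NP_p$ guarantee (by a direct Sylow count: the number $n_p$ of Sylow $p$-subgroups divides $|G|$ and is $\equiv 1\pmod p$, hence equals $1$) that $G$ has a unique normal Sylow $p$-subgroup $P$, with $P\cong \C_p$. Set $N=C_G(P)$; since $P$ is normal, so is $N$, and $P\leq N$ because $P$ is abelian. By the N/C theorem, $G/N$ embeds into $\Aut(P)\cong \C_{p-1}$, so in particular $G/N$ is cyclic of order dividing $p-1$ (and $\gcd(|G/N|,p)=1$).

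Next I would produce a suitable normal subgroup $M$ of $G$ to quotient by. Since $P$ is a normal Sylow $p$-subgroup of $N$ and $\gcd(p,|N|/p)=1$ (using that $p^2\nmid |G|$), Schur--Zassenhaus supplies a complement $M\leq N$ with $N=PM$ and $P\cap M=1$; because $P\leq Z(N)$, this is in fact a direct product $N=P\times M$. Then $M$ is precisely the set of elements of $N$ of order coprime to $p$, so $M$ is characteristic in $N$, and therefore normal in $G$. In the quotient $G/M$, the image $\overline P:=PM/M\cong P$ is a normal subgroup of order $p$, and $(G/M)/\overline P\cong G/N$ is cyclic of order dividing $p-1$, hence of order coprime to $p$.

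Finally I would split this short exact sequence: another application of Schur--Zassenhaus to $G/M$ yields a complement $H$ to $\overline P$, giving $G/M=\overline P\rtimes H\cong \C_p\rtimes H$ with $H\cong G/N$ cyclic of order dividing $p-1$. It remains to verify that $H$ acts faithfully on $\overline P$. If $g\in G$ and $gM$ centralizes $\overline P$, then for every $x\in P$ we have $gxg^{-1}x^{-1}\in M$; but $gxg^{-1}\in P$ since $P\trianglelefteq G$, so $gxg^{-1}x^{-1}\in P\cap M=1$, whence $g\in C_G(P)=N=PM$. Thus $C_{G/M}(\overline P)=\overline P$, so the complement $H$ intersects the centralizer trivially and acts faithfully, as required.

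The only place where genuine input is needed is the initial application of Sylow's theorems (the rest is essentially formal once a normal cyclic Sylow $p$-subgroup is in hand); the mildly fiddly point I expect to write most carefully is the faithfulness verification, since one has to remember to combine normality of $P$ in $G$ with $P\cap M=1$ to rule out nontrivial elements of the action kernel.
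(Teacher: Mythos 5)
Your proof is correct and takes essentially the same route as the paper: both extract the normal Sylow $p$-subgroup $P\cong\C_p$ from the Sylow count built into $\NP_p$, and both obtain the desired quotient by factoring out the characteristic $p'$-Hall subgroup of $\C_G(P)$ (your $M$ is the paper's $K$), on which quotient the complement acts faithfully on the image of $P$. The only difference is bookkeeping: the paper applies Schur--Zassenhaus once to split $G=P\rtimes A$ and uses the Dedekind modular law to get $\C_G(P)=P\times K$, whereas you invoke the $N/C$ theorem and apply Schur--Zassenhaus twice (inside $\C_G(P)$ and again in $G/M$), checking faithfulness by the direct computation $\C_{G/M}(\overline{P})=\overline{P}$.
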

\begin{proof}
Since $|G|\in \NP_p$, $G$ has a normal Sylow $p$-subgroup $P$ of order $p$.  By the Schur-Zassenhaus Theorem, there exists $A\leq G$ such that $G=P\rtimes A$. Let $C=\C_G(P)$ and $K=\C_A(P)=C\cap A$. Since $P$ is abelian, $P\leq C$ and the Dedekind Modular Law yields $PK=P(C\cap A)=C\cap PA=C\cap G=C$.  Since $P\cap A=1$, also $P\cap K=1$ hence  $C=P\times K$. Since $P$ and $K$ have coprime orders, they are characteristic in $C$. Moreover, $P$ is characteristic in $G$ hence so are $C$ and $K$. Write $\overline{X}$ for $XK/K$. Note that $P\cap K=1$ hence $\overline{P}\cong P\cong\C_p$. By order considerations, we have $\overline{G}=\overline{P}\rtimes\overline{A}$. Now, $G/C$ acts faithfully by conjugation on $P$. Since $P\cap K=1$, it follows that $\overline{G}/\overline{C}=\overline{G}/\overline{P}\cong \overline{A}$ acts faithfully by conjugation on $\overline{P}$. Since $\overline{P}\cong\C_p$, it follows that $\overline{A}$ is cyclic of order dividing $p-1$. This completes the proof (with $H=\overline{A}$).
\end{proof}

\begin{definition}
We say that a group $G$ \emph{has the BCQO property} (for ``bounded cyclic quotient order'') if there exists $n\in\NN$ such that every cyclic quotient of $G$ has order at most $n$.  
\end{definition}

\begin{lemma}\label{EasyFG}
If $G$ is a finitely generated group, then either $G$ has an infinite cyclic quotient or $G$ has the BCQO property.
\end{lemma}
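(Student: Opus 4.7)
The plan is to reduce the question to the abelianization $G^{\mathrm{ab}} = G/[G,G]$, since every cyclic quotient of $G$ is abelian and therefore factors through $G^{\mathrm{ab}}$. Consequently, the cyclic quotients of $G$ are precisely the cyclic quotients of $G^{\mathrm{ab}}$, and $G$ has a cyclic quotient of order $n$ (resp.\ an infinite cyclic quotient) if and only if $G^{\mathrm{ab}}$ does.

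Since $G$ is finitely generated, so is $G^{\mathrm{ab}}$. I would then invoke the structure theorem for finitely generated abelian groups to write
\[
G^{\mathrm{ab}} \cong \ZZ^{r} \oplus T
\]
where $r\geq 0$ is the free rank and $T$ is a finite torsion group. The argument then splits into two cases according to the value of $r$.

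If $r\geq 1$, projection onto the first $\ZZ$-summand yields an infinite cyclic quotient of $G^{\mathrm{ab}}$, and hence of $G$, putting us in the first alternative. If $r=0$, then $G^{\mathrm{ab}}$ is finite, so every quotient of $G^{\mathrm{ab}}$ has order at most $|G^{\mathrm{ab}}|$; in particular every cyclic quotient of $G$ has order at most $|G^{\mathrm{ab}}|$, so the BCQO property holds with $n=|G^{\mathrm{ab}}|$.

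There is no real obstacle here: the only nontrivial ingredient is the structure theorem for finitely generated abelian groups, and the reduction to the abelianization is immediate from the fact that cyclic groups are abelian. The proof is essentially a one-line observation followed by a case split on whether $G^{\mathrm{ab}}$ is finite or infinite.
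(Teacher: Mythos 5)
Your proof is correct and follows essentially the same route as the paper: pass to the abelianization, apply the structure theorem for finitely generated abelian groups, and split on whether the free rank is zero (giving the BCQO property with bound $|G^{\mathrm{ab}}|$) or positive (giving an infinite cyclic quotient). The paper's proof is just a more condensed version of exactly this argument.
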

\begin{proof}
Since $G$ is finitely generated, so is the abelian group $G/G'$. By the classification of finitely generated abelian groups, either $G/G'$ is finite and $G$ has the BCQO property, or $G/G'$ has an infinite cyclic quotient, in which case so does $G$.
\end{proof}

\begin{lemma}\label{MediumFG}
If $G$ is a finitely generated group and there are infinitely many primes $p$ such that $G$ has a quotient isomorphic to $\D_{2p}$, then $G$ has an infinite dihedral quotient.
\end{lemma}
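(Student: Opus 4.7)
The plan is a pigeonhole reduction to a single index-$2$ subgroup of $G$, followed by an abelian-group argument to produce a $\ZZ$-quotient with the correct conjugation action, followed by an explicit semidirect-product assembly.

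Since $G$ is finitely generated, $G/G'$ is finitely generated abelian, so $\mathrm{Hom}(G,\C_2)$ is finite. For each odd prime $p$ for which $G$ has a quotient isomorphic to $\D_{2p}$, composing a surjection $\phi_p\colon G\twoheadrightarrow\D_{2p}$ with the abelianisation map $\D_{2p}\to\D_{2p}/\C_p\cong\C_2$ yields a nontrivial element of $\mathrm{Hom}(G,\C_2)$. By pigeonhole, a single nontrivial $\chi\colon G\to\C_2$ arises this way for infinitely many primes $p$. Let $H=\ker\chi$, which is finitely generated as it has index $2$ in $G$, and fix $t\in G\setminus H$.

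For each such $p$, the restriction $\psi_p:=\phi_p|_H\colon H\twoheadrightarrow\C_p$ is surjective, and since reflections invert rotations in $\D_{2p}$ we have $\psi_p(tht^{-1})=\psi_p(h)^{-1}$ for all $h\in H$. Each $\psi_p$ factors through $A:=H/H'$, and the induced map $\bar\psi_p\colon A\to\C_p$ vanishes on $I:=\{a+\bar c(a):a\in A\}$, where $\bar c$ is the automorphism of $A$ induced by conjugation by $t$. Hence each $\bar\psi_p$ factors through $B:=A/I$, a finitely generated abelian group on which $\bar c$ acts as $-1$. Since $B$ admits $\C_p$ as a quotient for infinitely many primes $p$, $B$ must be infinite, so by the classification of finitely generated abelian groups it has $\ZZ$ as a quotient. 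Composing yields a surjective homomorphism $\eta\colon H\twoheadrightarrow\ZZ$ satisfying $\eta(tht^{-1})=-\eta(h)$ for all $h\in H$.

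Finally, define $\Psi\colon G\to\D_\infty=\ZZ\rtimes\C_2$, identifying $\C_2=\{\pm 1\}$, by $\Psi(g)=(\mu(g),\chi(g))$ where $\mu(h)=\eta(h)$ for $h\in H$ and $\mu(th)=-\eta(h)$ for $h\in H$. A routine case-check verifies that $\Psi$ is a homomorphism; the only nontrivial consistency issue arises when both factors lie in $G\setminus H$, and it forces $\eta(t^2)=0$. This is automatic, because $t\cdot t^2\cdot t^{-1}=t^2$ combined with the inversion property gives $\eta(t^2)=-\eta(t^2)$ in the torsion-free group $\ZZ$. Surjectivity of $\Psi$ follows from surjectivity of $\eta$ and the fact that $\Psi(t)\notin\ZZ\times\{1\}$. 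I expect the main conceptual hurdle to be the passage from a family of $\C_p$-quotients to a single $\ZZ$-quotient carrying the right conjugation action, which is resolved by isolating $B$ as above; after that, the $\D_\infty$-assembly is formal.
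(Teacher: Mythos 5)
Your proof is correct, and its skeleton matches the paper's: both arguments start by using finite generation to pigeonhole infinitely many of the given dihedral quotients onto a single index-$2$ subgroup (your $H=\ker\chi$; the paper's common subgroup $M_p$, obtained from finiteness of $G/G^2$ rather than of $\mathrm{Hom}(G,\C_2)$, which is the same observation). Where you diverge is in how the infinite cyclic quotient with the inversion action is produced and turned into an infinite dihedral quotient. The paper intersects the kernels, setting $N=\bigcap_p N_p$, notes that the relations $[m_1,m_2]$, $b^2$ and $m_1^b m_1$ lie in every $N_p$ and hence in $N$, so that $G/N$ is generalised dihedral over the finitely generated infinite abelian group $M_p/N$; it then exploits the fact that every subgroup of the abelian part of a generalised dihedral group is normal, so choosing $X$ with $M_p/X$ infinite cyclic immediately gives $G/X\cong\D_\infty$. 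You instead form the universal object $B=H^{\mathrm{ab}}/\mathrm{im}(1+\bar c)$, use the induced surjections $B\twoheadrightarrow\C_p$ only to certify that $B$ is infinite, and then assemble the map onto $\ZZ\rtimes\C_2$ explicitly; your forced identity $\eta(t^2)=0$ is the exact counterpart of the paper's relation $b^2\in N_p$. The trade-off is that your route never needs the intersection of the kernels nor the generalised dihedral normality fact, at the cost of a routine but explicit verification that $\Psi$ is a homomorphism, whereas the paper gets normality of the relevant kernel (and hence the quotient) for free from the generalised dihedral structure. One cosmetic point: you silently discard $p=2$ (where $\D_4$ is the Klein group); this is harmless since infinitely many odd primes remain.
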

\begin{proof}
Let $\PP_G$ be an infinite set of primes $p$ such that $N_p$ is a normal subgroup of $G$ with $G/N_p\cong\D_{2p}$, and let $M_p$ be a normal subgroup of $G$ containing $N_p$ such that $G/M_p\cong\C_2$. Note that $M_p/N_p\cong \C_p$.

Let $G^2$ be the group generated by squares in $G$. Since $G$ is finitely generated, $G/G^2$ is finite hence the set $\{M_p:p\in\PP_G\}$ is finite. Since $\PP_G$ is an infinite set, there exists an infinite subset $\PP_G'$ of $\PP_G$ such that $M_p=M_q$ for every $p,q\in \PP_G'$.

Let $N=\bigcap_{p\in \PP_G'} N_p$. Note that $M_p/N$ is a normal subgroup of index $2$ in the infinite group $G/N$. Let $m_1,m_2\in M_p$ and $b\in G\setminus M_p$. For every $p\in \PP_G'$, we have $[m_1,m_2],b^2,m_1^bm_1\in N_p$. It follows that $[m_1,m_2],b^2,m_1^bm_1\in N$. This shows that $G/N$ is a generalised dihedral group over the abelian group $M_p/N$. In particular, every subgroup $X$ satisfying $N\leq X\leq M_p$ is normal in $G$. Since $M_p$ has index $2$ in $G$, it is finitely generated hence so is $M_p/N$.  Since $|\PP_G'|$ is infinite, so is $|M_p/N|$. As $M_p/N$ is abelian, there exists $X$ with $N\leq X\leq M_p$ such that $M_p/X$ is infinite cyclic, and  thus $G/X$ is infinite dihedral.
\end{proof}

Note that the hypothesis of finite generation in Lemmas~\ref{EasyFG} and \ref{MediumFG} is important. Without it, $\prod_{p\in\PP} \C_p$ would be a counterexample to Lemma~\ref{EasyFG} and $\left(\prod_{p\in\PP} \C_p\right)\rtimes \C_2$ (with $\C_2$ acting by inversion) would be a counterexample to Lemma~\ref{MediumFG}.

Given $a\in\NN$ and a group $G$, let $\A_a(G)$ be the subgroup of $G$ generated by elements having orders that divide $a$, and  $\OO_a(G)$ be the subgroup of $G$ generated by elements having orders that are odd and divide $a$. Clearly $\OO_a(G)$ and $\A_a(G)$ are characteristic subgroups of $G$ with $\OO_a(G)\leq \A_a(G)$.

\begin{lemma}\label{lem:BoundedOrderGen}
Let $G$ be a group. If there exists $a\in \NN$ such that $G=\A_a(G)$, then $G$ has the BCQO property.
\end{lemma}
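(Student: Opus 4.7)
The plan is to argue directly: if $G/N$ is a cyclic quotient of $G$, then I want to show that $|G/N|$ divides $a$, which immediately gives the BCQO property with bound $a$.

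First I would observe that the construction $\A_a$ is compatible with surjective homomorphisms: if $\pi\colon G\to Q$ is surjective, then the image under $\pi$ of an element of order dividing $a$ is again an element of order dividing $a$, so $\pi(\A_a(G))\leq \A_a(Q)$, and since $\A_a(G)=G$ by hypothesis, we get $Q=\pi(G)=\pi(\A_a(G))\leq \A_a(Q)\leq Q$, forcing $\A_a(Q)=Q$.

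Next I would specialize to the case $Q=\C_n$. In a cyclic group $\C_n$, an element $x$ satisfies $x^a=1$ if and only if its order divides $a$, and the set of such elements is exactly the unique subgroup of order $\gcd(n,a)$. In particular $\A_a(\C_n)$ is this subgroup (it is already closed under the group operation, so no ``generation'' is needed). Combining with the previous paragraph, the assumption $\A_a(\C_n)=\C_n$ forces $\gcd(n,a)=n$, i.e.\ $n$ divides $a$, and hence $n\leq a$.

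Putting these together, every cyclic quotient of $G$ has order at most $a$, so $G$ has the BCQO property. There is no real obstacle here; the only thing to be careful about is the first step, namely that $\A_a$ is preserved (as a whole subgroup, not element-wise in both directions) under surjections, which is what allows the hypothesis $G=\A_a(G)$ to pass to the cyclic quotient.
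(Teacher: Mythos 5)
Your proof is correct and is essentially the paper's argument: both rest on the fact that in a cyclic group of order $n$ the elements of order dividing $a$ form the subgroup of order $\gcd(a,n)$, the paper pulling this subgroup back to a proper subgroup $M<G$ for a contradiction, while you push $\A_a$ forward into the quotient and conclude $n\mid a$ directly. The only thing to add is a word about a possible infinite cyclic quotient (the BCQO property excludes those as well), but your first step already disposes of it, since $\A_a(\ZZ)=1\neq\ZZ$.
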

\begin{proof}
Suppose to the contrary that $G$ has cyclic quotients of arbitrarily large order. In particular, $G$ has a normal subgroup $N$ such that $G/N$ is cyclic of order $n$ for some $n>a$. Let $M$ be the unique normal subgroup of $G$ containing $N$ such that $M/N$ is cyclic of order $\gcd(a,n)$. Since $n>a$, we know that $\gcd(a,n)<n$ and hence $M<G$. If $g$ is an element of $G$ of order dividing $a$, then $g\in M$. It follows that $\A_a(G)\leq M<G$, a contradiction.
\end{proof}

\begin{lemma}\label{lem:struc}
Let $a$ be a positive integer and let $G$ be a finite group with $|G|\in \SP_a$. If  $G=\A_a(G)$, then $G/\OO_a(G)$ is not cyclic.
\end{lemma}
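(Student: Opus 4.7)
The plan is to assume for contradiction that $G/\OO_a(G)$ is cyclic and then produce a dihedral group $\D_{2p}$, with $p$ an odd prime, as a quotient of $G/\OO_a(G)$; this contradicts cyclicity. The starting point is to apply Lemma~\ref{lemmaQuo}: since $|G|\in\SP_a$, there is some prime $p\in\PP_a$ with $|G|\in\NP_p$, so $G$ has a quotient $\bar G\cong\C_p\rtimes H$, where $H$ is cyclic of order dividing $p-1$ and acts faithfully on $\C_p$.

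Next I would exploit the hypothesis $G=\A_a(G)$ together with the bound $\gcd(a,p-1)\leq 2$ built into the definition of $\PP_a$. Since $\bar G$ is a surjective image of $G$, it is generated by elements of order dividing $a$; each such element of $\bar G$ has order dividing $\gcd(a,p|H|)=\gcd(a,|H|)$ (using $\gcd(a,p)=1$), which in turn divides $\gcd(a,p-1)\leq 2$. So $\bar G$ is generated by its involutions. The case $p=2$ is eliminated immediately: then $|H|=1$ and $\bar G\cong\C_2$, but $a$ must be odd (since $2\in\PP_a$ forces $\gcd(a,2)=1$), so every element of order dividing $a$ is trivial in $\C_2$. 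For $p$ odd, a direct computation in $\C_p\rtimes H$ gives $(c,h)^2=(c\cdot h(c),h^2)$, so the involutions are exactly the elements $(c,h_0)$ with $h_0$ an order-$2$ element of $H$ (which, by faithfulness, acts as inversion on $\C_p$) and $c\in\C_p$ arbitrary; faithfulness also makes $h_0$ unique. Taking products of pairs of involutions shows that the subgroup they generate contains all of $\C_p$, so it equals $\C_p\rtimes\langle h_0\rangle\cong\D_{2p}$. For this subgroup to coincide with $\bar G$ we must have $|H|=2$ and $\bar G\cong\D_{2p}$.

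To finish, I would observe that in $\D_{2p}$ the only possible odd orders are $1$ and $p$, and since $\gcd(a,p)=1$, every odd-order element whose order divides $a$ is trivial. Hence $\OO_a(G)$ is contained in the kernel of $G\to\bar G$, so $\D_{2p}$ is a quotient of $G/\OO_a(G)$. Since $p$ is an odd prime, $\D_{2p}$ is non-cyclic, contradicting the assumption. The main obstacle is the middle step: identifying the involutions of $\C_p\rtimes H$ and pinning down the order of the subgroup they generate, which forces $\bar G\cong\D_{2p}$. Everything else is bookkeeping on the divisibility relations between $a$, $p$, and $|H|$ that the hypothesis $p\in\PP_a$ provides.
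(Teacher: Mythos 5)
Your proof is correct and takes essentially the same route as the paper: both rest on Lemma~\ref{lemmaQuo} together with the conditions $\gcd(a,p)=1$ and $\gcd(a,p-1)\leq 2$ to force the image of $\OO_a(G)$ to be trivial and the images of the generators of $\A_a(G)=G$ to have order at most $2$, producing a non-cyclic quotient of $G/\OO_a(G)$. The only difference is that you go further and identify the quotient as exactly $\D_{2p}$ via the involution analysis; the paper stops once it knows $H\neq 1$, since $\C_p\rtimes H$ with $H$ faithful and nontrivial is already non-abelian, and your sharper identification essentially reproduces the argument of the paper's Theorem~\ref{theo:main}.
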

\begin{proof}
By definition, there exists $p\in\PP_a$ such that $|G|\in \NP_p$. By Lemma~\ref{lemmaQuo}, $G$ has a quotient of the form $\C_p\rtimes H$, with $H$ cyclic of order dividing $p-1$ acting faithfully on $\C_p$. For $X\leq G$, write $\overline{X}$ for the image of $X$ in $\C_p\rtimes H$. Since $p\in\PP_a$, we have $\gcd(a,p)=1$ and $\gcd(a,(p-1))\leq 2$. It follows that $\overline{\OO_a(G)}=1$ and that $\overline{\A_a(G)}=\overline{G}>\C_p$. In particular, $\overline{G}$ is not cyclic, as required.
\end{proof}

\section{Main results for groups}\label{sec:main}

For a group $G$, let $\OQ(G)$ be the subset of odd integers in $\FQ(G)$.

\begin{theorem}\label{theo:mainOdd}
If $G$ is a group with the BCQO property, then  $\d(\OQ(G))=0$.
\end{theorem}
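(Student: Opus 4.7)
The plan is to argue by contradiction: assume $\du(\OQ(G)) > 0$ and use Corollary~\ref{Cor:density1} together with Lemma~\ref{lemmaQuo} to manufacture a cyclic quotient of $G$ whose order exceeds the BCQO bound. Let $n$ be a positive integer such that every cyclic quotient of $G$ has order at most $n$, and set $a = n!$. Applying Corollary~\ref{Cor:density1} with $X = \OQ(G)$ yields infinitely many primes $p \in \PP_a$ with $\OQ(G) \cap \NP_p \neq \emptyset$; in particular, there exists such a prime $p > n$.

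Pick $m \in \OQ(G) \cap \NP_p$ and a normal subgroup $N$ of $G$ with $|G/N| = m$. By Lemma~\ref{lemmaQuo}, $G/N$ (and therefore $G$) has a quotient of the form $Q = \C_p \rtimes H$ with $H$ cyclic of order dividing $p-1$, acting faithfully on $\C_p$. Since $|H|$ divides $m$ and $m$ is odd, $|H|$ is odd. Moreover, factoring out the normal subgroup $\C_p$ of $Q$ exhibits $H$ as a cyclic quotient of $G$, so the BCQO hypothesis forces $|H| \leq n$.

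Now exploit the defining properties of $\PP_a$. Since $p \in \PP_a$, we have $\gcd(a, p-1) \leq 2$, and since $|H|$ is odd and divides $p-1$, this gives $\gcd(a, |H|) = 1$. If $|H| > 1$, then $|H| \leq n$ implies $|H| \mid n! = a$, contradicting $\gcd(a, |H|) = 1$. Hence $|H| = 1$, so $Q = \C_p$ is a cyclic quotient of $G$ of order $p > n$, contradicting BCQO. Since $\d(\OQ(G)) = 0$ is equivalent to $\du(\OQ(G)) = 0$, this completes the argument.

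The main obstacle is choosing the auxiliary integer $a$ so that Corollary~\ref{Cor:density1} delivers primes for which every potential small value of $|H|$ is excluded by the coprimality $\gcd(a, |H|) = 1$, while still retaining the constraint $\gcd(a, p-1) \leq 2$ that defines $\PP_a$. The choice $a = n!$ threads this needle, and the odd-order hypothesis of $\OQ(G)$ is exactly what reduces $\gcd(a, p-1) \leq 2$ to $\gcd(a, |H|) = 1$; the rest is bookkeeping.
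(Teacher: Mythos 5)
Your proof is correct and follows essentially the same route as the paper: assume $\du(\OQ(G))>0$, invoke Corollary~\ref{Cor:density1} to find $p\in\PP_a$ with an odd quotient order in $\NP_p$, apply Lemma~\ref{lemmaQuo}, and use the BCQO bound together with $\gcd(a,p-1)\leq 2$ and oddness to force $|H|=1$, yielding a forbidden cyclic quotient $\C_p$. The only cosmetic difference is that you take $a=n!$ explicitly, whereas the paper passes directly to an $a$ divisible by all cyclic quotient orders.
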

\begin{proof}
Since $G$ has the BCQO property, there exists a positive integer $a$ such that every finite cyclic quotient of $G$ has order dividing $a$. Let $\PP_G=\{p\in\PP_a\colon \OQ(G)\cap \NP_p \neq \emptyset\}$. Suppose, for a contradiction, that $\du(\OQ(G))>0$. Corollary~\ref{Cor:density1} implies that $\PP_G$ is nonempty (infinite, in fact) with say $p\in\PP_G$. By definition, we know that $G$ has a quotient whose order is odd and lies in $\NP_p$. By Lemma~\ref{lemmaQuo}, $G$ has a quotient of the form $\C_p\rtimes H_p$, with $H_p$ cyclic of odd order dividing $p-1$. It follows that $G$ has a quotient isomorphic to $H_p$, which is cyclic, so $|H_p|$ divides $a$. Since $p\in\PP_a$, we have $\gcd(a,p-1)\leq 2$ and so  $|H_p|=1$. This implies that $G$ has a quotient isomorphic to $\C_p$, which is also cyclic,  but that is a contradiction since $\gcd(p,a)=1$. It follows that $\du(\OQ(G))=0=\d(\OQ(G))$.
\end{proof}

Again note that the hypothesis that $G$ has the BCQO property is important. Otherwise, taking $G=\prod_{p\in\PP} \C_p$, we find that $\OQ(G)$ is the set of square-free odd positive integers, which has positive natural density. It is not hard to modify this example to obtain any given set of  odd positive integers closed under taking divisors as $\OQ(G)$. This yields a wild variety of possible densities.

\begin{theorem}\label{theo:main}
Let $G$ be a group with the BCQO property. If $\du(\FQ(G))>0$, then there are infinitely many primes $p$ such that $G$ has a quotient isomorphic to $\D_{2p}$.
\end{theorem}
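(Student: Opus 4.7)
The plan is to adapt the argument for Theorem~\ref{theo:mainOdd} almost verbatim; the only change is that, without the restriction to odd orders, the final case analysis terminates at a dihedral quotient rather than at a contradiction. First I would use the BCQO property to fix a positive integer $a$ such that every cyclic quotient of $G$ has order dividing $a$. This $a$ exists because the set of orders of cyclic quotients of $G$ is bounded by hypothesis, hence finite, and we may take $a$ to be its lcm.

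Next, since $\du(\FQ(G))>0$, I would apply Corollary~\ref{Cor:density1} with $X=\FQ(G)$ to conclude that the set $\PP_G=\{p\in\PP_a : \FQ(G)\cap \NP_p\neq \emptyset\}$ is infinite. For each $p\in\PP_G$, choose a finite quotient $Q_p$ of $G$ with $|Q_p|\in\NP_p$, and apply Lemma~\ref{lemmaQuo} to produce a further quotient of $Q_p$ (and hence of $G$) of the form $\C_p\rtimes H_p$, where $H_p$ is cyclic of order dividing $p-1$ and acts faithfully on $\C_p$.

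The bookkeeping step is the crux. Since $H_p$ is itself a cyclic quotient of $G$, we have $|H_p|\mid a$, and combined with $|H_p|\mid (p-1)$ this gives $|H_p|\mid \gcd(a,p-1)$. By definition of $\PP_a$, this gcd is at most $2$, so $|H_p|\in\{1,2\}$. If $|H_p|=1$, then $\C_p$ is a quotient of $G$, forcing $p\mid a$ and contradicting $\gcd(a,p)=1$. Hence $|H_p|=2$, and $\C_p\rtimes \C_2$ with the faithful action is precisely $\D_{2p}$. Thus $G$ has $\D_{2p}$ as a quotient for every $p\in\PP_G$, and $\PP_G$ is infinite, as required.

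There is no real obstacle here: the argument is essentially a structural refinement of the proof of Theorem~\ref{theo:mainOdd}, exploiting the fact that the ``$|H_p|=2$ outcome'' which was forbidden in the odd setting is now permitted and corresponds precisely to the dihedral group $\D_{2p}$. The only mild subtlety is to pass from BCQO (a bound on orders) to divisibility by a single $a$, which is handled at the outset by taking an lcm.
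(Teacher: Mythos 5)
Your proposal is correct and follows essentially the same route as the paper: apply Corollary~\ref{Cor:density1} to get infinitely many $p\in\PP_a$ with $\FQ(G)\cap\NP_p\neq\emptyset$, invoke Lemma~\ref{lemmaQuo}, and use the divisibility constraints $|H_p|\mid\gcd(a,p-1)\leq 2$ and $\gcd(a,p)=1$ to rule out $|H_p|=1$ and force $G/N_p\cong\D_{2p}$. The only cosmetic difference is that you argue directly with $H_p$ as a cyclic quotient rather than naming the intermediate normal subgroup $M_p$ as the paper does.
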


\begin{proof}
Since $G$ has the BCQO property, there exists a positive integer $a$ such that every finite cyclic quotient of $G$ has order dividing $a$. Let $\PP_G=\{p\in\PP_a\colon \FQ(G)\cap \NP_p \neq \emptyset\}$. Since $\du(\FQ(G))>0$, Corollary~\ref{Cor:density1} implies that $\PP_G$ is an infinite set. By definition, for every $p\in \PP_G$, we know that $G$ has a quotient with order in $\NP_p$. Also for every $p\in \PP_G$, it follows from Lemma~\ref{lemmaQuo} that $G$ has a quotient of the form $\C_p\rtimes H_p$, with $H_p$ cyclic of order dividing $p-1$ acting faithfully on $\C_p$. Given $p\in \PP_G$, let $N_p$ be a normal subgroup of $G$ such that $G/N_p\cong \C_p\rtimes H_p$ as in the previous sentence, and let $M_p$ be the normal subgroup of $G$ containing $N_p$ such that $G/M_p\cong H_p$.  We know that $G/M_p$ is cyclic and so $|G/M_p|$ divides $a$. Since $p\in\PP_a$, we have $\gcd(a,p-1)\leq 2$ hence  $|G/M_p|\leq 2$. If $G=M_p$, then $G/N_p\cong \C_p$, which is a contradiction since $\gcd(p,a)=1$. It follows that $|G/M_p|=2$ and $G/N_p\cong \D_{2p}$.  Since $\PP_G$ is infinite, this completes the proof.
\end{proof}

Combining all this, we can now prove \cref{Theo:A}.

\medskip

\noindent\emph{Proof of \cref{Theo:A}.}
If $G$ has an infinite cyclic quotient, then we obtain case (\ref{Density1}). Otherwise, by Lemma~\ref{EasyFG}, $G$ has the BCQO property. If $G$ has an infinite dihedral quotient, then $\FQ(G)$ contains all even positive integers, and by Theorem~\ref{theo:mainOdd}, $\d(\OQ(G))=0$ and so  $\d(\FQ(G))=1/2$, and we obtain case (\ref{DensityHalf}).  Otherwise, $G$ has no infinite dihedral quotient, and then by Lemma~\ref{MediumFG} and Theorem~\ref{theo:main} we have $\du(\FQ(G))=0$ and we obtain case (\ref{Density0}). \qed

\begin{remark}\label{LarsenComparison}
We briefly compare our \cref{Theo:A} to Theorem 0.3 in Larsen's paper\cite{Larsen}. If a finitely generated group $G$ with infinite profinite completion has quotient dimension $n$ (see~\cite[Definition 0.2]{Larsen} for a precise definition), then Larsen's Theorem 0.3 implies that the abscissa of convergence of its zeta-function $Z_G$  (see Section~\ref{sec:intro})   is $1/n$, and $Z_G$ is singular at $1/n$. In particular, if $n\geq 2$ then $\d(\FQ(G))=0$ and in fact stronger statements can be made about the growth type of $\FQ(G)$. On the other hand, if $n=1$ then Larsen's Theorem 0.3 does not imply anything about $\d(\FQ(G))$. Indeed, the infinite cyclic group $\ZZ$, the infinite dihedral group $\ZZ\rtimes\C_2$ and $\ZZ^2\rtimes\C_4$ (the orientation-preserving symmetry group of a square tiling) all have quotient dimension $1$, but $\d(\FQ(\ZZ))=1$, $\d(\FQ(\ZZ\rtimes\C_2)=1/2$ and $\d(\FQ(\ZZ^2\rtimes\C_4)=0$. So in this case, our \cref{Theo:A} gives more information. This means that our \cref{Theo:A} and Larsen's Theorem 0.3 are each stronger than the other in different regimes of growth.
\end{remark}

In view of Remark~\ref{LarsenComparison}, we pose the following question.

\begin{question}
If $G$ is a finitely generated group, what are the possible growth types for $|\FQ(G)\cap\{1,\ldots,n\}|$ as a function of $n$?
\end{question}

Obviously \cref{Theo:A} is extremely useful when applied to finitely generated groups, but Theorems~\ref{theo:mainOdd} and~\ref{theo:main} can also be applied to many non-finitely generated groups. For example, we have the following corollary to Theorem~\ref{theo:main}.

\begin{corollary}\label{cor:struc}
Let $a\in\NN$ and let $G$ be a group with $\du(\FQ(G))>0$. If $G=\A_a(G)$, then $G/\OO_a(G)$ is not cyclic.
\end{corollary}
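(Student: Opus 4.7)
The plan is to combine Theorem~\ref{theo:main} with a direct inspection of the dihedral quotients it produces, using that $\mathrm{D}_{2p}$ has trivial $\mathcal{O}_a$ whenever $p$ is an odd prime coprime to $a$.

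First, I would invoke Lemma~\ref{lem:BoundedOrderGen}: since $G=\mathcal{B}_a(G)$, the group $G$ has the BCQO property. Because $\overline{\mathrm{d}}(\chi_F(G))>0$, Theorem~\ref{theo:main} then supplies infinitely many primes $p$ for which $G$ has a quotient isomorphic to $\mathrm{D}_{2p}$. Among these infinitely many primes, I can choose one that is odd and coprime to $a$, since only finitely many primes divide $2a$.

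Next, I would show that the corresponding surjection $\varphi\colon G \to \mathrm{D}_{2p}$ annihilates $\mathcal{O}_a(G)$. Each generator $x$ of $\mathcal{O}_a(G)$ has order that is odd and divides $a$, so $\varphi(x)$ has order satisfying the same two conditions in $\mathrm{D}_{2p}$. But the elements of $\mathrm{D}_{2p}$ of odd order all lie in the cyclic rotation subgroup of order $p$, so their orders are either $1$ or $p$; since $p\nmid a$, such an image is forced to be trivial. Hence $\mathcal{O}_a(G)\leq \ker\varphi$, and $\varphi$ factors through a surjection $G/\mathcal{O}_a(G)\twoheadrightarrow \mathrm{D}_{2p}$.

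If $G/\mathcal{O}_a(G)$ were cyclic, its quotient $\mathrm{D}_{2p}$ would be cyclic as well, contradicting $p\geq 3$. There is no real obstacle: all the heavy lifting was done in Theorem~\ref{theo:main}, and the corollary is essentially an observation about the structure of $\mathrm{D}_{2p}$ together with the hypothesis that $\gcd(p,a)=1$ for the primes supplied by that theorem.
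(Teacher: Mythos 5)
Your proposal is correct and follows essentially the same route as the paper: Lemma~\ref{lem:BoundedOrderGen} gives the BCQO property, Theorem~\ref{theo:main} supplies infinitely many dihedral quotients $\D_{2p}$, one picks $p$ avoiding the prime divisors of $2a$ (the paper takes $p>a$, which serves the same purpose), and then the observation that nontrivial elements of $\D_{2p}$ have order $2$ or $p$ forces $\OO_a(G)$ into the kernel, so $G/\OO_a(G)$ surjects onto the non-cyclic group $\D_{2p}$. No gaps; your kernel argument is just a slightly more detailed version of the paper's one-line justification.
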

\begin{proof}
By Lemma~\ref{lem:BoundedOrderGen}, $G$ has the BCQO property. By Theorem~\ref{theo:main}, there are infinitely many primes $p$ such that $G$ has a quotient isomorphic to $\D_{2p}$. Choose such a prime $p$ with $p>a$ and let $N$ be a normal subgroup of $G$ such that $G/N\cong\D_{2p}$. Nontrivial elements of $\D_{2p}$ have order $2$ or $p$, so $\OO_a(G)\leq N$, and $G/\OO_a(G)$ has a quotient isomorphic to $\D_{2p}$ so cannot be cyclic.
\end{proof}

\begin{example}\label{NonFGExample}
If $G$ is the free product of (possibly infinitely many) cyclic groups of bounded order, at most one of which has even order, then $\d(\FQ(G))=0$.
\end{example}

Finally, we note that densities other than $0$, $1/2$ and $1$ are achievable for finitely generated groups if the set of finite quotients is restricted to those of various kinds. For example, if the set is restricted to quotients via torsion-free normal subgroups (usually called `smooth quotients'), then one can show that the density for the free product $\C_s \ast \C_t$ is either $0$, $1/\lcm(s,t)$ or $1/2\lcm(s,t)$.

\section{Applications to locally-transitive graphs}\label{sec:Graphs}

\subsection{Preliminaries}
All graphs and groups in this section are finite. Our terminology is fairly standard. A \emph{graph} $\Gamma$ consists of a set of vertices $\V(\Gamma)$ and a set of edges $\E(\Gamma)\subseteq\binom{V}{2}$.  An \emph{arc} of $\Gamma$ is an ordered pair of adjacent vertices. A \emph{$2$-arc} is a triple $(v_0,v_1,v_2)$ of distinct vertices such that $v_1$ is adjacent to both $v_0$ and $v_2$. 

An \emph{automorphism} of $\Gamma$ is a permutation of $\V(\Gamma)$ that preserves $\E(\Gamma)$. The automorphisms of $\Gamma$ form its \emph{automorphism group} $\Aut(\Gamma)$, under composition. We say that $\Gamma$ is \emph{vertex-transitive}  (respectively \emph{edge-transitive}, \emph{arc-transitive}, \emph{$2$-arc-transitive}) if $\Aut(\Gamma)$ is transitive on the set of vertices (respectively edges, arcs, $2$-arcs) of $\Gamma$. 


If $v$ is a vertex of $\Gamma$ and $G\leq\Aut(\Gamma)$, we denote the orbit of $v$ under the action of $G$ by $v^G$, the stabiliser  in $G$ of $v$ by $G_v$, and the permutation group induced by $G_v$ on the neighbourhood $\Gamma(v)$ of $v$ by $G_v^{\Gamma(v)}$. We say that $G$ is \emph{locally-transitive} on $\Gamma$ if $G_v^{\Gamma(v)}$ is transitive for every $v\in\V(\Gamma)$.

We collect a few results regarding edge- and local-transitivity. These are fairly standard but we include proofs for completeness.

\begin{lemma}\label{lemma:edge-transitive}
Let $\Gamma$ be a connected graph, let $\{u,v\}$ be an edge of $\Gamma$, and let $G\leq\Aut(\Gamma)$. The following hold:
\begin{enumerate}
\item $G$ is locally-transitive on $\Gamma$ if and only if $G_u^{\Gamma(u)}$ and $G_v^{\Gamma(v)}$ are transitive. \label{TwoVertices}
\item If $G$ is locally-transitive on $\Gamma$, then $G$ acts transitively on edges of $\Gamma$. \label{LocalToET}
\item If $G$ acts transitively on edges of $\Gamma$, then $\V(\Gamma)=u^G\cup v^G$. \label{ETAtMostTwoOrbits}
\item If $G$ acts transitively on edges of $\Gamma$, and $\Gamma$ is not regular of even valency, then $G$ is locally-transitive on $\Gamma$. \label{ETToLocal}
\end{enumerate}
\end{lemma}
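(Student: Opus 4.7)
Parts (1)--(3) are mostly routine connectedness arguments. For (1), the forward direction is immediate. For the converse, I would observe that if $x \in \V(\Gamma)$ has transitive local action and $y \in \Gamma(x)$, then for any $y' \in \Gamma(x)$ some $g \in G_x$ satisfies $g(y)=y'$, so conjugation by $g$ makes the action of $G_{y'}$ on $\Gamma(y')$ permutation-isomorphic to that of $G_y$ on $\Gamma(y)$. Consequently, the set $T$ of vertices at which the local action is transitive has the closure property that whenever $x \in T$ and some neighbour of $x$ lies in $T$, all neighbours of $x$ lie in $T$. Starting from the adjacent pair $u,v \in T$ and applying this closure along shortest paths exhausts $\V(\Gamma)$ by connectedness. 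For (2), local transitivity at any single vertex $w$ places all edges incident to $w$ in one $G$-orbit, and a path in $\Gamma$ links any two chosen edges via a sequence of edges that consecutively share endpoints. For (3), since $\Gamma$ is connected and contains the edge $\{u,v\}$, every vertex lies on some edge, and edge-transitivity sends that edge to $\{u,v\}$, placing its endpoints in $u^G \cup v^G$.

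The bulk of the work is in part (4). By (3), either $u^G = v^G$ or $\{u^G, v^G\}$ is a $G$-invariant bipartition of $\V(\Gamma)$, and I would treat these two cases separately.

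Consider first the bipartite case $u^G \neq v^G$. Since $u^G$ and $v^G$ are themselves $G$-orbits, this bipartition is $G$-invariant, so the arc set splits, as a $G$-set, into arcs from $u^G$ to $v^G$ and their reverses. Each edge contributes exactly one arc to each side, so the single $G$-orbit on edges corresponds to exactly two $G$-orbits on arcs. Since the total number of $G$-orbits on arcs equals the number of $G_u$-orbits on $\Gamma(u)$ plus the number of $G_v$-orbits on $\Gamma(v)$, and $u, v$ each have at least one neighbour, each of these two summands equals $1$, so $G_u^{\Gamma(u)}$ and $G_v^{\Gamma(v)}$ are both transitive and part (1) finishes the job.

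In the vertex-transitive case ($u^G = \V(\Gamma)$), the graph $\Gamma$ is regular of some valency $d$, and the $G$-orbits on arcs biject, via the starting vertex, with the $G_u$-orbits $O_1,\ldots,O_k$ on $\Gamma(u)$. Since the arc-to-edge forgetful map has fibres of size $2$ and there is a single $G$-orbit on edges, $k \leq 2$. If $k = 1$, local transitivity at $u$ together with vertex-transitivity and part (1) finishes the job. The crux is to rule out $k = 2$: in that case $(u,v)$ and $(v,u)$ must lie in distinct $G$-orbits (otherwise only one arc-orbit would cover the single edge-orbit), so each edge contributes exactly one arc to each of the two orbits; counting arcs starting at $u$ in each orbit then yields $|O_1|=|O_2|=d/2$, making $d$ even and contradicting the hypothesis. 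This orbit-counting argument in the vertex-transitive case is the step I expect to require the most care.
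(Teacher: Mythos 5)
Your argument is correct, and parts (1)--(3) are essentially the paper's own proofs (conjugating stabilisers along an edge and propagating by connectedness; mapping an arbitrary incident edge onto $\{u,v\}$). For part (4) you take a mildly different route to the same parity punchline. The paper first asks whether some element of $G$ reverses an edge (if so, $G$ is arc-transitive, hence locally-transitive); if not, it orients each edge using the $G$-orbit of $(u,v)$ and, in the vertex-transitive case, deduces that the resulting digraph has equal in- and out-valency, so $\Gamma$ would be regular of even valency, while in the non-vertex-transitive case it obtains transitivity of $G_u^{\Gamma(u)}$ and $G_v^{\Gamma(v)}$ directly from edge-transitivity and the bipartition $\V(\Gamma)=u^G\cup v^G$. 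You instead split on the number of vertex orbits and do the bookkeeping via the standard bijection between $G$-orbits on arcs and $G_x$-orbits on $\Gamma(x)$ for orbit representatives $x$: in the bipartite case the identity ``number of arc-orbits equals the number of $G_u$-orbits on $\Gamma(u)$ plus the number of $G_v$-orbits on $\Gamma(v)$'' combined with there being exactly two arc-orbits yields local transitivity with no valency hypothesis (matching the paper), and in the vertex-transitive case your computation $|O_1|=|O_2|=d/2$ is exactly the paper's in-valency-equals-out-valency count in different notation; as you anticipate, in a final write-up you should spell out that it uses the global count (each orbit has $|\E(\Gamma)|$ arcs, one per edge) together with vertex-transitivity to localise at $u$. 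Both arguments invoke finiteness at this point, which is legitimate since all graphs in that section of the paper are finite. Nothing is missing; the difference is one of packaging --- explicit arc-orbit counting versus the paper's orientation picture --- and your version has the small advantage of making the $k\leq 2$ bound and the $k=2$ exclusion completely combinatorial.
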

\begin{proof}\mbox{}
\begin{enumerate}
\item Necessity is trivial. For sufficiency, let $w$ be a neighbour of $v$. Since $G_v^{\Gamma(v)}$ is transitive, $u$ is in the same $G$-orbit as $w$. It follows that $G_w^{\Gamma(w)}$ is transitive. Using this argument repeatedly together with connectedness completes the proof.
\item Consider an edge incident to $\{u,v\}$, say $\{v,w\}$ without loss of generality. Since $G$ is locally-transitive, there is an element of $G_v$ mapping $u$ to $w$, and thus mapping $\{u,v\}$ to $\{v,w\}$. This implies that every edge incident to $\{u,v\}$ is in its $G$-orbit. Using this argument repeatedly together with connectedness completes the proof.
\item Let $x\in\V(\Gamma)$. Since $\Gamma$ is connected, there is an edge $\{x,y\}$ incident with $x$. Since $G$ is transitive on edges, there exists an element of $G$ mapping $\{x,y\}$ to $\{u,v\}$, and thus mapping $x$ to one of $u$ or $v$, as required.
\item If there exists an automorphism of $G$ reversing an edge, then $G$ is transitive on arcs and locally-transitive. We now assume this is not the case. It follows that for each edge $\{x,y\}$, exactly one of $(x,y)$ and $(y,x)$ is in the $G$-orbit of $(u,v)$. Therefore the $G$-orbit of $(u,v)$ induces a directed  graph $\vec{\Gamma}$ on $\V(\Gamma)$. If $G$ is transitive on $\V(\Gamma)$, then $\vec{\Gamma}$ must be in-regular and out-regular and, by finiteness, its in-valency is equal to its out-valency, which implies that $\Gamma$ is regular of even valency, contrary to our hypothesis. Hence we may assume that $G$ is not transitive on $\V(\Gamma)$ and, by (\ref{ETAtMostTwoOrbits}), that $\V(\Gamma)$ is the disjoint union of $u^G$ and $v^G$. It follows that all arcs in $\vec{\Gamma}$ go from $u^G$ to $v^G$. Since $G$ is transitive on edges incident to $u$, we find that $G_u^{\Gamma(u)}$ is transitive. The same reasoning shows that $G_v^{\Gamma(v)}$ is transitive, and thus $G$ is locally-transitive by (\ref{TwoVertices}).
\end{enumerate}
\end{proof}

\begin{definition}[\cite{MorganSpigaVerret}]
Let $L_1$ and $L_2$ be transitive permutation groups and let $[L_1, L_2]$ denote the multiset containing $L_1$ and $L_2$. We say that $(\Gamma, G)$ is a \emph{locally-$[L_1, L_2]$} pair if $\Gamma$ is a connected graph, $G\leq\Aut(\Gamma)$, and for some edge $\{u,v\}$ of $\Gamma$, we have permutation isomorphisms $G_u^{\Gamma(u)}\cong L_1$ and $G_v^{\Gamma(v)}\cong L_2$. 

The multiset $[L_1, L_2]$ is said to be \emph{locally-restrictive} if there exists $a\in\NN$ such that for every  locally-$[L_1, L_2]$ pair $(\Gamma, G)$ and every $v\in\V(\Gamma)$, we have $|G_v|\leq a$.
\end{definition}

\subsection{Main results for locally-transitive graphs}

\begin{theorem}\label{theo:mainGraphs}
Let $L_1$ and $L_2$ be permutation groups with the following properties:
\begin{enumerate}
\item $\OO(L_1)$ and $\OO(L_2)$ are transitive, and \label{OddTransitive}
\item $[L_1, L_2]$ is locally-restrictive.
\end{enumerate}
Then the set of orders of graphs $\Gamma$ for which there exists some locally-$[L_1, L_2]$ pair $(\Gamma,G)$ has natural density $0$.
\end{theorem}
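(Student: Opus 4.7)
The plan is to reduce to a finite family of amalgamated products arising from Bass-Serre theory, show each has the BCQO property and only finitely many dihedral quotients, and then invoke Theorem~\ref{theo:main}.

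First, in any locally-$[L_1,L_2]$ pair $(\Gamma,G)$ with $n:=|\V(\Gamma)|$, local-restrictiveness bounds $|G_v|\leq a$ for some uniform constant $a$, and since $\OO(L_i)$ is transitive so is $L_i$, so Lemma~\ref{lemma:edge-transitive} gives that $G$ is edge-transitive with at most two vertex orbits. Orbit-stabiliser then yields $n/2\leq|G|\leq an$, so it suffices to show that the set of possible $|G|$ has natural density $0$. By Bass-Serre theory applied to the universal cover of $\Gamma$, each such $G$ is a quotient of an amalgamated product $U=G_u*_{G_{uv}}G_v$ (when $G$ has two vertex orbits) or $U=G_u*_{G_{uv}}G_{\{u,v\}}$ (when $G$ is vertex-transitive, so some element swaps the endpoints of an edge, making $G_{\{u,v\}}$ an index-$2$ overgroup of $G_{uv}$). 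Since all of these subgroups have order at most $2a$, only finitely many such amalgams $U_1,\ldots,U_k$ arise up to isomorphism; each is finitely generated, and its abelianisation is a pushout of finite abelian groups, hence finite, so each $U_i$ has the BCQO property by Lemma~\ref{EasyFG}.

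The central step is to show that each $U_i$ admits only finitely many $\D_{2p}$ quotients. Applying Lemma~\ref{lem:odd} to the surjection $G_u\twoheadrightarrow L_1$ (whose kernel is contained in $G_{uv}$) together with the transitivity of $\OO(L_1)$ gives $L_1=\OO(L_1)(L_1)_v$, which lifts to the factorisation $G_u=\OO(G_u)G_{uv}$; similarly $G_v=\OO(G_v)G_{uv}$. Suppose now $\psi\colon U_i\twoheadrightarrow\D_{2p}$ for a prime $p$ exceeding $\max(|G_u|,|G_v|,|G_{\{u,v\}}|)$. Then $|\psi(G_u)|$ divides $\gcd(|G_u|,2p)=\gcd(|G_u|,2)\leq 2$, so $\psi(G_u)$ lies in a copy of $\C_2$; hence $\psi(\OO(G_u))\leq\OO(\C_2)=1$, and therefore $\psi(G_u)=\psi(\OO(G_u)G_{uv})=\psi(G_{uv})$. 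The same argument gives $\psi(G_v)=\psi(G_{uv})$ and, in the vertex-transitive case, also $|\psi(G_{\{u,v\}})|\leq 2$. In either case $\psi(U_i)$ is contained in a copy of $\C_2$ inside $\D_{2p}$, contradicting surjectivity.

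Having ruled out all but finitely many $\D_{2p}$ quotients of each $U_i$, the contrapositive of Theorem~\ref{theo:main} gives $\d(\FQ(U_i))=0$. Since the set of possible $|G|$ is contained in $\bigcup_{i=1}^k\FQ(U_i)$, a finite union of density-$0$ sets, and since $n$ lies within a bounded ratio of $|G|$, the set of graph orders has natural density $0$. The main obstacle is the dihedral-quotient step: one must exploit the $\OO(L_i)$-transitivity to force every amalgamating subgroup of $U_i$ to map into a subgroup of order at most $2$ inside $\D_{2p}$, leaving no room to generate the $\C_p$ factor.
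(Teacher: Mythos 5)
Your route is genuinely different from the paper's, and its core is sound. The paper never passes to universal completions: it works inside the given group, setting $H=\langle \OO(G_{v_1}),\OO(G_{v_2})\rangle$ for an edge $\{v_1,v_2\}$, observing that $H=\OO_a(H)$, so Lemma~\ref{lem:struc} forces $|H|\notin\SP_a$, and then Theorem~\ref{Theo:density1} gives density $0$ for the possible $|H|$ directly (Lemma~\ref{lem:odd} and hypothesis (\ref{OddTransitive}) are used to see that $H$ is locally-transitive, hence edge-transitive with at most two vertex orbits, so $|\V(\Gamma)|$ is controlled by $|H|$). You instead encode $(\Gamma,G)$ as a quotient of one of finitely many universal completions $U_i$ of amalgams with members of order at most $2a$, show each $U_i$ has the BCQO property, and rule out $\D_{2p}$-quotients for $p>2a$ so that the contrapositive of Theorem~\ref{theo:main} applies. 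Your dihedral-exclusion step is the right use of hypothesis (\ref{OddTransitive}): the factorisation $G_u=\OO(G_u)G_{uv}$ (via Lemma~\ref{lem:odd} and transitivity of $\OO(L_1)$) gives $\psi(G_u)=\psi(G_{uv})\leq\psi(G_{\{u,v\}})$, which is essential -- merely knowing both $\psi(G_u)$ and $\psi(G_{\{u,v\}})$ have order at most $2$ would not suffice, since two distinct involutions generate all of $\D_{2p}$. Two points you gloss over but which are fine in this setting: the surjection $U_i\twoheadrightarrow G$ requires $G=\langle G_u,G_v\rangle$ (resp.\ $\langle G_u,G_{\{u,v\}}\rangle$), which needs local transitivity and connectedness, not just edge-transitivity; and the existence of an edge-reversing element in the vertex-transitive case also uses local transitivity (vertex-transitive plus locally transitive implies arc-transitive; half-arc-transitive actions would otherwise be a problem).

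There is, however, one genuine gap: the final density transfer. You argue that since $n/2\leq |G|\leq an$ and the set of possible $|G|$ lies in $\bigcup_i\FQ(U_i)$, which has density $0$, the set of possible $n$ has density $0$ ``since $n$ lies within a bounded ratio of $|G|$''. That inference is false as stated: a density-$0$ set $S$ can have $\bigcup_{m\in S}\bigl[m/a,2m\bigr]\cap\NN$ of density $1$ (take $S$ the set of powers of $2$ and $a\geq 2$), so bounded ratio alone does not transfer density. The fix is the bounded-fibre count that the paper itself performs: by orbit--stabiliser, $n=|G|/|G_u|$ or $n=|G|/|G_u|+|G|/|G_v|$ with $|G_u|,|G_v|\leq a$, so each value of $|G|$ yields at most $a+a^2$ possible orders $n$, and $|G|\leq an$; hence the number of admissible $n\leq N$ is at most $(a+a^2)\,\bigl|\bigl(\bigcup_i\FQ(U_i)\bigr)\cap\{1,\ldots,aN\}\bigr|=o(N)$. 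With this replacement for your last sentence, the proof goes through.
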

\begin{proof}
As $[L_1, L_2]$ is locally-restrictive, there exists $a\in\NN$ such that for every locally-$[L_1, L_2]$ pair $(\Gamma,G)$ and every $v\in\V(\Gamma)$, the order of $G_v$  divides $a$. 

Let $(\Gamma,G)$ be a locally-$[L_1, L_2]$ pair. By definition, we have $\OO_a(G_v)=\OO(G_v)$ for every $v\in\V(\Gamma)$. Let $\{v_1,v_2\}$ be an edge of $\Gamma$ and let $H=\langle \OO(G_{v_1}),\OO(G_{v_2})\rangle$. By the previous observation, we have $H=\langle \OO_a(G_{v_1}),\OO_a(G_{v_2})\rangle=\OO_a(H)$. It follows by Lemma~\ref{lem:struc} that $|H|\notin \SP_a$. By Theorem~\ref{Theo:density1}, the set of possibilities for $|H|$ has natural density $0$.

By Lemma~\ref{lem:odd}, $(\OO(G_{v_1}))^{\Gamma({v_1})}=\OO((G_{v_1})^{\Gamma({v_1})})\cong \OO(L_1)$ which is transitive by hypothesis. By the same argument, we find that $(\OO(G_{v_2}))^{\Gamma({v_2})}$  is also transitive. By Lemma~\ref{lemma:edge-transitive}(\ref{TwoVertices}) and (\ref{LocalToET}), $H$ is transitive on edges of $\Gamma$ and, by  (\ref{ETAtMostTwoOrbits}), $\V(\Gamma)=v_1^G\cup v_2^G$. It follows that either $|\V(\Gamma)|=|H:H_{v_1}|$ (if $v_1^G=v_2^G$) or $|\V(\Gamma)|=|H:H_{v_1}|+|H:H_{v_2}|$. Since $|H_{v_i}|\leq |G_{v_i}|\leq a$, there is only  a bounded number of possibilities for $|H_{v_i}|$. Hence for each $|H|$ there  is only  a bounded number of possibilities for $|H:H_{v_i}|$ and thus only  a bounded number of possibilities for $|\V(\Gamma)|$. Since the set of possibilities for $|H|$ has natural density $0$, the same holds for  the set of possibilities for $|\V(\Gamma)|$.
\end{proof}

Note that condition (\ref{OddTransitive}) from Theorem~\ref{theo:mainGraphs} almost always holds for quasiprimitive groups. (A nontrivial permutation group is said to be \emph{quasiprimitive} if each of its nontrivial normal subgroups is transitive.)

\begin{lemma}\label{QuasiprimitiveGenOdd}
If $L$ is a quasiprimitive group, then $\OO(L)$ is transitive unless $|L|=2$.
\end{lemma}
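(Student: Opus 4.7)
The plan is to exploit the fact that $\OO(L)$ is a characteristic subgroup of $L$, being defined purely in group-theoretic terms as the subgroup generated by the elements of odd order. In particular $\OO(L)$ is normal in $L$, so by quasiprimitivity either $\OO(L)$ is trivial or $\OO(L)$ is transitive. Thus it suffices to show that $\OO(L)=1$ forces $|L|=2$.

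To handle this reduced case, observe that $\OO(L)=1$ means every nonidentity element of $L$ has even order, so $L$ is a nontrivial $2$-group. Since $L$ is (in particular) transitive on some set $\Omega$, I would next exploit that a nontrivial $2$-group has a nontrivial center $Z(L)$: being characteristic, $Z(L)$ is normal, and hence transitive by quasiprimitivity. A transitive abelian group is regular, so $Z(L)$ is regular on $\Omega$. Since a regular abelian subgroup is self-centralizing in $\Sym(\Omega)$, the inclusion $L\leq \C_{\Sym(\Omega)}(Z(L))=Z(L)$ forces $L=Z(L)$, so $L$ is itself abelian and regular on $\Omega$.

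To finish, suppose for contradiction that $|L|>2$. Then $L$ is a nontrivial abelian $2$-group of order at least $4$, so it has a proper nontrivial subgroup $M$. Since $L$ is abelian, $M$ is normal in $L$, and so by quasiprimitivity $M$ is transitive on $\Omega$. But $L$ is regular on $\Omega$, so $|\Omega|=|L|>|M|$, contradicting the transitivity of $M$. Therefore $|L|=2$.

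The only nonroutine step is the reduction from ``$L$ a quasiprimitive $2$-group'' to ``$L$ abelian and regular'', which is a standard center/self-centralizing trick; once that is in place, the contradiction with a proper normal subgroup is immediate.
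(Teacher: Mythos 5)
Your proof is correct and follows essentially the same route as the paper: observe that $\OO(L)$ is normal, so quasiprimitivity reduces everything to the case $\OO(L)=1$, in which $L$ is a nontrivial $2$-group whose nontrivial centre must be transitive. The paper just finishes more directly: it takes a central subgroup of order $2$, whose transitivity forces the degree to be at most $2$ and hence $|L|=2$, bypassing your regularity/self-centralising step and the final argument with a proper nontrivial subgroup.
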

\begin{proof}
Since $\OO(L)$ is normal in $L$ which is quasiprimitive, $\OO(L)$ is transitive unless $\OO(L)=1$, which implies that $L$ is a $2$-group.
In that case, $L$ has a nontrivial central subgroup of order $2$ and this must be transitive (since $L$ is quasiprimitive), so $L$ has degree at most $2$ and the result follows. 
\end{proof}

It is conjectured that  $[L_1, L_2]$ is locally-restrictive if and only if $L_1$ and $L_2$ are both semiprimitive (see~\cite{MorganSpigaVerret}). (A permutation group is said to be \emph{semiprimitive} if all of its normal subgroups are transitive or semiregular.) This conjecture is still wide open but, if true, it would imply (together with Theorem~\ref{theo:mainGraphs} and Lemma~\ref{QuasiprimitiveGenOdd}) that  the set of orders of locally-quasiprimitive graphs of fixed valency at least $3$ has natural density $0$. In the meantime, using some of the known facts related to this conjecture, we can now prove \cref{Theo:B}.

\medskip

\noindent\emph{Proof of \cref{Theo:B}.}
Let $d\in\{3,5\}$, let $\Gamma$ be a connected regular edge-transitive graph of valency $d$, and let $G=\Aut(\Gamma)$. It was shown by Goldschmidt~\cite{Goldschmidt}  (when $d=3$) and Morgan~\cite{Morgan} (when $d=5$) that there exists $a\in\NN$ (not depending on $\Gamma$) such that $|G_v|\leq a$ for every $v\in\V(\Gamma)$.

Since $d$ is odd, Lemma~\ref{lemma:edge-transitive}(\ref{ETToLocal}) implies that $G$ is locally-transitive on $\Gamma$. It follows that  $(\Gamma,G)$ is a locally-$[L_1,L_2]$ pair with $L_1,L_2$ being transitive groups of degree $d$. Since $d$ is prime, $L_1$ and $L_2$ are quasiprimitive and  Lemma~\ref{QuasiprimitiveGenOdd} implies that $\OO(L_1)$ and $\OO(L_2)$ are transitive. Moreover, the results of Goldschmidt and Morgan mentioned earlier imply that  $[L_1, L_2]$ is locally-restrictive and part (\ref{edge-transitive}) follows by Theorem~\ref{theo:mainGraphs}.

Now, suppose that $\Gamma$ is as in part (\ref{arc-transitive}) or (\ref{2arc-transitive}), namely that $\Gamma$ is a connected graph and is either arc-transitive with prime valency $p\geq 3$ or $2$-arc-transitive with valency  $d\geq 3$. In this case, $\Gamma$ is vertex-transitive hence $(\Gamma,\Aut(\Gamma))$ is a locally-$[L_1,L_2]$ pair with $L_1$ and $L_2$ being permutation isomorphic. Moreover, $[L_1,L_2]$ being locally-restrictive is equivalent to $L_1$ being graph-restrictive (see~\cite{PSVGraphRestrictive,Verret}). In part (\ref{arc-transitive}), $L_1$ is a transitive group of degree $p$, whereas in part (\ref{2arc-transitive}), $L_1$ is a $2$-transitive group of degree $d$. In both cases, $L_1$ is quasiprimitive of degree at least $3$ and hence $\OO(L_1)$ is transitive by Lemma~\ref{QuasiprimitiveGenOdd}.

Finally, it follows from work of Tutte~\cite{Tutte} and Trofimov and Weiss~\cite{WeissGrass,Weiss} that transitive groups of prime degree and $2$-transitive groups are graph-restrictive. (For more details, see \cite[Section~2]{PSVGraphRestrictive}, in particular \cite[Theorem 6]{PSVGraphRestrictive} and the surrounding paragraphs.) \qed

\begin{remark}
It was shown in~\cite{PSVEnumeration} that the number of connected $2$-arc-transitive graphs of valency $3$ and order at most $n$ grows roughly like $n^{c\log n}$, for some constant $c$. Using the same approach, one can prove similar facts about the other families of graphs in \cref{Theo:B}. So while the number of possibilities for the orders up to $n$ grows slower than $n$, the number of graphs themselves grows much faster. Note that a similar phenomenon occurs for many of the ``density $0$'' conclusions in other parts of this paper. (The size of the set of orders grows slowly, but the size of the set of objects, such as distinct quotients, grows much faster.)
\end{remark}

\begin{remark} We briefly discuss the tightness of the statements in \cref{Theo:B}.
\begin{enumerate}
\item For each $n\geq 3$, there is a connected $2$-arc-transitive graph of order $n$ and valency $2$, namely the cycle graph of order $n$. This shows that the lower bound on the valency in parts (\ref{arc-transitive}) and (\ref{2arc-transitive}) of \cref{Theo:B} is necessary.
\item For $k\geq 1$ and $r\geq 3$, let $\W(k,r)$ be the graph with vertex set $\ZZ_k\times\ZZ_r$ and edge set $\{\{(x,y),(x',y+1)\}:x,x'\in\ZZ_k,y\in\ZZ_r\}$. One can check that $\W(k,r)$ is a connected arc-transitive graph of order $kr$ and valency $2k$. It follows that for fixed even $d=2k\geq 2$, the set of orders of connected arc-transitive graphs of valency $d$ has natural density at least $\frac{2}{d}$. This shows that one cannot allow even valency in part (\ref{edge-transitive}) or (\ref{arc-transitive}) of \cref{Theo:B}.
\item For $k\geq 1$ and $r\geq 2$, let $\SW(k,r)$ be the graph with vertex set $\ZZ_k\times\ZZ_r\times\ZZ_2$ and edge set $\{\{(x,y,0)(x,y,1)\}:x\in\ZZ_k,y\in\ZZ_r\}\cup  \{\{(x,y+1,0)(x',y,1)\}:x,x'\in\ZZ_k,y\in\ZZ_r\}$. One can check that $\SW(k,r)$ is a connected vertex-transitive graph of order $2kr$ and valency $k+1$. It follows that for fixed $d=k+1\geq 2$, the set of orders of connected vertex-transitive graphs of valency $d$ has natural density at least $\frac{1}{2(d-1)}$. This shows that one cannot relax from arc-transitive to vertex-transitive in part (\ref{arc-transitive}) of \cref{Theo:B}.
\end{enumerate}
\end{remark}

This leaves us with the following natural questions:
\begin{question}
Let $d$ be an odd composite positive integer. What is the natural density of the set of orders of connected arc-transitive graphs of valency $d$?
\end{question}
\begin{question}
Let $d$ be an odd integer greater than or equal to $7$. What is the natural density of the set of orders of connected edge-transitive graphs of valency $d$?
\end{question}

We are unable to resolve these questions at the moment, but we expect the answer to be $0$ in all cases.

\bigskip

{\bf Acknowledgements.} We thank Andrea Lucchini, Jeroen Schillewaert and Pablo Spiga  for some helpful discussions. The first and third authors are grateful to New Zealand's Marsden Fund for its support via grant UOA2030.

\end{document}